\title{Quasi-coherent sheaves on complex analytic spaces}
\newcommand\setItemnumber[1]{\setcounter{enumi}{\numexpr#1-1\relax}}
\newtheorem{thm}{Theorem}[section]
\newtheorem{lm}[thm]{Lemma}
\newtheorem{ft}[thm]{Fact}
\theoremstyle{definition}
\newtheorem{df}[thm]{Definition}
\newtheorem{eg}[thm]{Example}
\def\C{\mathbb{C}}
\def\cH{\mathcal{H}}
\DeclareMathOperator\coker{coker}
\DeclareMathOperator\colim{colim}
\DeclareMathOperator\Hom{Hom}
\DeclareMathOperator\Id{Id}
\DeclareMathOperator{\Mod}{Mod}
\DeclareMathOperator{\Qch}{Qch}
\author{Haohao \textsc{Liu}\,\orcidlink{0009-0007-5942-8174}\\ email: \href{mailto:kyung@mail.ustc.edu.cn}{kyung@mail.ustc.edu.cn}}
\affil{Institut de Mathématiques de Jussieu-Paris Rive Gauche, 4 place Jussieu,  Paris  75005, France}
\date{\today}
\begin{document}
	\maketitle
		\begin{abstract}
		We show that in the category of analytic sheaves on a complex analytic space, the full subcategory of quasi-coherent sheaves  is  an abelian subcategory.  
	\end{abstract}
\section{Introduction}
Let $(X,O_X)$ be a ringed space. The category of $O_X$-modules is denoted by $\Mod(O_X)$.
\begin{df}An $O_X$-module $F$ is called \emph{quasi-coherent} if for  every $x\in X$, there is an open neighborhood $U\subset X$ of $x$, two sets $I,J$  and  a morphism $O_U^{\oplus J}\to O_U^{\oplus I}$ with cokernel  isomorphic to $F|_U$. The full subcategory of $\Mod(O_X)$  of quasi-coherent modules is denoted by $\Qch(X)$.\end{df} According to \cite[\href{https://stacks.math.columbia.edu/tag/01BD}{Tag 01BD}]{stacks-project}, in general $\Qch(X)$ is not an abelian category. By \cite[\href{https://stacks.math.columbia.edu/tag/06YZ}{Tag 06YZ}]{stacks-project}, if $X$ is a scheme, then  $\Qch(X)$ is a weak Serre subcategory (in the sense of \cite[\href{https://stacks.math.columbia.edu/tag/02MO}{Tag 02MO~(2)}]{stacks-project}) of $\Mod(O_X)$.   We show a complex analytic analog of this result, contrary to a guess made in \cite{421073}.
\begin{thm}\label{thm:qchabel}If $X$ is a complex analytic space, then $\Qch(X)\subset \Mod(O_X)$ is a weak Serre subcategory. In particular, it is an abelian subcategory.\end{thm}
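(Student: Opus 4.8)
The plan is to verify directly that $\Qch(X)$ is closed under kernels, cokernels, and extensions taken in $\Mod(O_X)$, these three properties being equivalent to being a weak Serre subcategory. All three are local, so I may work on a small open set; moreover, restricting a presentation along an open immersion is exact and preserves free modules, so I may assume that each quasi-coherent sheaf in sight is, on a chosen Stein open $V$, literally a cokernel $\coker(O_V^{\oplus J}\to O_V^{\oplus I})$. The two analytic inputs I would use are Oka's coherence theorem -- so that $O_X$, every finite free module, and the kernel of any map of coherent sheaves is coherent -- and the existence of a neighbourhood basis of Stein opens together with Cartan's Theorems A and B. I would also use that $\Mod(O_X)$ is a Grothendieck category, so that filtered colimits are exact and commute with the formation of kernels and images.

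The core computation is closure under kernels, and here I would avoid all cohomology. Consider first $\ker(\phi\colon O_V^{\oplus A}\to O_V^{\oplus B})$: since sections of a direct sum have finite support, this is the directed union of the subsheaves $\ker(O_V^{\oplus A'}\to O_V^{\oplus B'})$ over finite $A'\subseteq A$ (the image automatically landing in a finite $O^{\oplus B'}$), each of which is coherent by Oka. On the Stein $V$, Theorem A produces a free module surjecting onto each of these coherent pieces over all of $V$; summing and composing with $\bigoplus_{A'}\ker(\cdots)\twoheadrightarrow\ker\phi$ gives a free surjection onto $\ker\phi$ defined on all of $V$, whose syzygy is again the kernel of a map of free modules and is handled the same way. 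Two applications thus yield a free presentation of $\ker\phi$ on $V$. For a general $\psi\colon O_V^{\oplus A}\to G$ with $G$ quasi-coherent, I would write $G$ as a filtered colimit of coherent sheaves $C_\mu$; because the source is free, each finite restriction $\psi|_{A'}$ factors through some $C_\mu$ (a purely algebraic lift of finitely many basis sections, with no shrinking), and one checks that $\ker\psi=\bigcup_{A',\mu}\ker(O_V^{\oplus A'}\to C_\mu)$ is once more a directed union of coherent subsheaves of a free module, so the same argument applies. A morphism $f\colon F\to G$ reduces to this case after presenting $F$ as a quotient of a free module.

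Closure under cokernels I would then derive from the kernel case. Writing $O_V^{\oplus I_F}\twoheadrightarrow F$ for a free surjection, $\widetilde f\colon O_V^{\oplus I_F}\to G$ for the induced map, and $p\colon O_V^{\oplus I_G}\twoheadrightarrow G$ for a presentation of $G$, the subsheaf $p^{-1}(\operatorname{im}\widetilde f)$ is the image of the projection of the fibre product $O_V^{\oplus I_G}\times_G O_V^{\oplus I_F}$, which is the kernel of a map from a free module to $G$ and hence quasi-coherent by the previous step; composing a free surjection onto it with the projection exhibits $\coker f$ as a cokernel of a map of free modules, so it is quasi-coherent.

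The genuine obstacle is closure under extensions, which cannot be deduced formally from the kernel and cokernel cases. Given $0\to F'\to F\to F''\to0$, I would write $F''$ as a filtered colimit of coherent sheaves $C_\mu$ and pull the extension back, so that $F=\colim_\mu F_\mu$ with each $F_\mu$ an extension of $C_\mu$ by $F'$. Here coherence of $C_\mu$ is decisive: it is generated near each point by finitely many sections with finitely many relations, so lifting these finitely many generators and relations to $F_\mu$ on a small neighbourhood -- together with the quasi-coherence of the kernel of a free surjection onto $F'$ from the kernel step -- produces a free presentation of $F_\mu$, with no cohomology. The remaining and decisive difficulty is to show that the filtered colimit $F=\colim_\mu F_\mu$ of these quasi-coherent sheaves is quasi-coherent. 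This colimit is not a subsheaf of a free module, so the directed-union-of-coherent technique fails, and one is forced to upgrade the local cokernel presentations of the $F_\mu$ to presentations valid on an entire Stein open. That globalization is the genuine obstacle: it is a cohomological statement that does not follow from Theorem B, because the higher cohomology of an infinite direct sum $O_V^{\oplus L}$ need not vanish on a Stein open. I would address it by a Mittag--Leffler argument along a Stein exhaustion of $V$ by relatively compact Stein subdomains, on which coherent approximations are acyclic, using surjectivity of the restriction maps to annihilate the relevant $\varprojlim^1$. Granting that quasi-coherent sheaves admit global free presentations on Stein opens, direct sums and filtered colimits of quasi-coherent sheaves are quasi-coherent, whence $F$ is quasi-coherent and $\Qch(X)$ is a weak Serre, hence abelian, subcategory.
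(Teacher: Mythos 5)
There is a genuine gap, and it sits at the two places where your argument has to leave the coherent world. First, in the kernel step for $\psi\colon O_V^{\oplus A}\to G$ with $G$ quasi-coherent, you assert that each finite restriction $\psi|_{A'}$ factors through some coherent stage $C_\mu$ of $G=\colim_\mu C_\mu$ ``purely algebraically, with no shrinking.'' This is precisely what fails on a noncompact $V$: a finite free sheaf is not a compact object of $\Mod(O_V)$, because $\Gamma(V,\cdot)$ does not commute with filtered colimits of sheaves. Concretely, on the unit disk $V$ take a discrete sequence $a_n\to\partial V$ and $C_n=O_V(a_1+\dots+a_n)$, so that $\colim_n C_n=O_V(D)$ with $D=\sum_n a_n$; a Mittag--Leffler function with a pole at every $a_n$ is a global section of the colimit, hence a morphism $O_V\to\colim_n C_n$, that factors through no stage. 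So the claimed exhaustion $\ker\psi=\bigcup_{A',\mu}\ker\bigl(O_V^{\oplus A'}\to C_\mu\bigr)$ breaks down, and with it the Theorem-A argument; your cokernel step, which feeds the fibre product (a kernel of a map from a free sheaf to $G$) into this step, inherits the same gap.

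Second, and fatally, the statement you propose to establish by a $\varprojlim^1$ argument --- that quasi-coherent sheaves admit global free presentations on Stein opens, whence direct sums and filtered colimits of quasi-coherent sheaves are quasi-coherent --- is false, and Example \ref{eg:Starr} (Gabber) refutes both halves of it. The sheaf $F_N$ there is locally free on the unit disk $X$ yet $\Gamma(X,F_N)=0$, so every morphism $O_X^{\oplus I}\to F_N$ is zero (it is determined by the images of the basis sections, which are global sections of $F_N$); hence the nonzero quasi-coherent sheaf $F_N$ admits no global free surjection on $X$, let alone a presentation, and no exhaustion or Mittag--Leffler argument can produce one. Moreover $\bigoplus_n F_n$ is not quasi-coherent, so closure under infinite direct sums --- which your final step would yield --- genuinely fails in the analytic setting. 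This is exactly why the paper abandons Stein opens in favour of Noetherian Stein compacta (Fact \ref{ft:I10}): on a compact $K$, sections and cohomology do commute with filtered colimits, so quasi-coherence becomes ``associated with an $O_K(K)$-module'' (Lemma \ref{lm:qch}), localization is exact and fully faithful with $H^{q>0}(K,\tilde M)=0$ (Lemma \ref{lm:locK}), kernels and cokernels follow from full faithfulness, and extensions follow from the vanishing plus the five lemma. Your instinct that extensions are the crux is correct, but the obstacle is dissolved by compactness, not by globalization over open Stein sets.
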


\section{Preliminaries}
Let $(X,O_X)$ be a ringed space. Every $O_X(X)$-module induces naturally a quasi-coherent $O_X$-module.
\begin{eg}[{\cite[\href{https://stacks.math.columbia.edu/tag/01BI}{Tag 01BI}]{stacks-project}}]\label{eg:inducedqch}
	Let $f:(X,O_X)\to (\{*\},O_X(X))$ be the morphism of ringed spaces, with $f:X\to \{*\}$ the unique map and  $f_*^{\natural}:O_X(X)\to O_X(X)$ the identity. Then $f$ is  flat. For an $O_X(X)$-module $M$, its pullback $f^*M$ is called the sheaf associated with $M$. This $O_X$-module is quasi-coherent. The functor $f^*:\Mod(O_X(X))\to \Mod(O_X)$ is called  \emph{localization} and denoted by $\tilde{\cdot}$.
\end{eg}

From \cite[4.1.1]{EGA1}, on a scheme the direct sum of any family of quasi-coherent modules is quasi-coherent. It fails for complex manifolds, shown by Example \ref{eg:Starr}.  
\begin{eg}\cite{453047}\label{eg:Starr}
	Let $X\subset \C$ be the  unit open  disk. For every integer $n\ge 2$, Gabber  (\cite[Eg.~2.1.6]{conrad2006relative})  constructs a locally free (hence quasi-coherent) $O_X$-module $F_n$ of infinite rank, such that for every open subset $U\subset X$ containing $\{\pm 1/n\}$, one has $\Gamma(U,F_n)=0$. We prove that $F:=\oplus_{n\ge 2}F_n$ is not quasi-coherent.
	
	Assume the contrary. Then there is an open neighborhood $V$ of $0\in X$, a set $I$  and  a  quotient morphism $q:O_V^{\oplus I}\to F|_V$.  There is an integer $N\ge 2$ with $\{\pm 1/N\}\subset V$. Let $p:F|_V\to F_N|_V$ be the quotient morphism. Because $\Hom_{\Mod(O_V)}(O_V,F_N|_V)=\Gamma(V,F_N)=0$, the morphism $pq=0$. However, it contradicts $F_N|_V\neq0$. 
\end{eg}
Let $X$ be a complex analytic space  in the sense of \cite[p.18]{grauert2013theory}. For an inclusion $i:K\to X$ of a compact subset, let $O_K=i^{-1}O_X$. Then $O_K$ is naturally a sheaf of rings on $K$.
\begin{df}
	A compact subset $K\subset X$ is  a \emph{Stein compactum}, if $K$ has a fundamental system of open neighborhoods that are Stein subspaces of $X$. A Stein compactum $K$ is  \emph{Noetherian} if $O_K(K)$ is a Noetherian ring.
\end{df}
\begin{ft}[{\cite[Thm.~I, 9; Rem.~I, 10]{frisch1967points}}]\label{ft:I10}
	Every $x\in X$ admits a neighborhood which is a Noetherian Stein compactum in $X$.
\end{ft}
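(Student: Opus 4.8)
The plan is to produce an explicit compact neighborhood and then verify the two defining properties separately, the second being the substantial one. By the definition of a complex analytic space, $x$ has an open neighborhood isomorphic to a local model $(A,O_A)$, where $A\subset\Omega$ is the common zero set of a coherent ideal sheaf $\mathcal{I}\subset O_\Omega$ on an open set $\Omega\subset\C^n$ and $O_A$ is the restriction to $A$ of $O_\Omega/\mathcal{I}$, with $x$ corresponding to a point $a\in A$. I would choose a closed polydisk $\overline{\Delta}\subset\Omega$ centered at $a$ and set $K:=A\cap\overline{\Delta}$. Since the open set $A\cap\Delta^{\circ}$ (with $\Delta^{\circ}$ the corresponding open polydisk) is a neighborhood of $a$ contained in $K$, the set $K$ is a compact neighborhood of $x$; it then remains to show that $K$ is a Stein compactum and that $O_K(K)$ is Noetherian.

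For the Stein compactum property I would use the open polydisks $\Delta'$ with $\overline{\Delta}\subset\Delta'\Subset\Omega$: they form a fundamental system of neighborhoods of $\overline{\Delta}$ and are Stein, being polydisks. Each $A\cap\Delta'$ is a closed analytic subspace of the Stein space $\Delta'$, hence Stein, and the family $\{A\cap\Delta'\}$ is a fundamental system of open neighborhoods of $K$ in $A$. Transporting through the chart isomorphism, $K$ acquires a fundamental system of Stein open neighborhoods in $X$, so $K$ is a Stein compactum.

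Next I would reduce the Noetherianity of $O_K(K)$ to that of the ring of germs along the closed polydisk. Writing $O(\overline{\Delta}):=\varinjlim_{\Delta'}\Gamma(\Delta',O_\Omega)$, the short exact sequence $0\to\mathcal{I}\to O_\Omega\to O_\Omega/\mathcal{I}\to 0$ together with Cartan's Theorem B (so $H^1(\Delta',\mathcal{I})=0$ for the Stein opens $\Delta'$ and the coherent $\mathcal{I}$) yields a surjection $\Gamma(\Delta',O_\Omega)\twoheadrightarrow\Gamma(\Delta',O_\Omega/\mathcal{I})$ for each such $\Delta'$. Passing to the filtered colimit over $\Delta'\downarrow\overline{\Delta}$, and using that $\{A\cap\Delta'\}$ is cofinal among the open neighborhoods of $K$ in $A$ so that $\varinjlim_{\Delta'}\Gamma(\Delta',O_\Omega/\mathcal{I})=\Gamma(K,O_K)=O_K(K)$, I obtain a ring surjection $O(\overline{\Delta})\twoheadrightarrow O_K(K)$. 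Since a quotient of a Noetherian ring is Noetherian, it suffices to prove that $O(\overline{\Delta})$ is Noetherian.

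Finally, the heart of the matter is the Noetherianity of $O(\overline{\Delta})$, which I would establish by induction on $n$, the case $n=0$ being trivial since the ring is then $\C$. For the inductive step, given a nonzero ideal $\mathfrak{a}\subset O(\overline{\Delta})$ I would choose $0\neq f\in\mathfrak{a}$ and, after a linear change of coordinates and a shrinking of the polydisk, write $f=u\cdot P$ with $u$ a unit and $P(w,z_n)=z_n^{d}+a_{d-1}(w)z_n^{d-1}+\dots+a_0(w)$ a Weierstrass polynomial whose coefficients $a_i$ lie in the germ ring $O(\overline{\Delta}_{n-1})$ of the polydisk in the first $n-1$ coordinates. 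Global Weierstrass division would then give an isomorphism $O(\overline{\Delta})/(P)\cong O(\overline{\Delta}_{n-1})^{\oplus d}$ of $O(\overline{\Delta}_{n-1})$-modules, so $O(\overline{\Delta})/(f)$ is a finite module over the Noetherian ring $O(\overline{\Delta}_{n-1})$ and is therefore a Noetherian ring; consequently $\mathfrak{a}/(f)$, and with it $\mathfrak{a}$, is finitely generated. The main obstacle is precisely this \emph{global} Weierstrass preparation and division over the whole compact polydisk: pointwise these are the classical theorems, but here one must arrange $f$ to be $z_n$-regular of a single fixed order $d$ \emph{uniformly} in $w\in\overline{\Delta}_{n-1}$. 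This forces a careful choice of coordinates and of a distinguished boundary torus on which $f$ does not vanish, so that the Cauchy-integral formulas expressing the coefficients $a_i$ and the division remainder converge uniformly over $\overline{\Delta}$; controlling this uniform behaviour, rather than the purely local algebra, is where the real difficulty lies.
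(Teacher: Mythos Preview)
The paper does not give a proof of this statement: it is recorded as a \emph{Fact} with a bare citation to Frisch, so there is no argument in the paper to compare against. What follows is therefore a comparison of your sketch with what Frisch actually does.

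Your reduction steps are sound. Choosing $K=A\cap\overline{\Delta}$, exhibiting the fundamental system $\{A\cap\Delta'\}$ of Stein neighborhoods, and using Theorem~B to pass from $O(\overline{\Delta})$ to $O_K(K)$ are all correct; this is a standard and efficient reduction.

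The gap is in the inductive Weierstrass step for $O(\overline{\Delta})$. You propose, for a given $0\neq f\in\mathfrak a$, to perform ``a linear change of coordinates and a shrinking of the polydisk'' so that $f$ becomes uniformly $z_n$-regular of a fixed order. But a linear change of coordinates does not carry $\overline{\Delta}$ to a polydisk, so the induction hypothesis (formulated for polydisks) no longer applies to the image; and shrinking the polydisk replaces $O(\overline{\Delta})$ by a different ring, so finite generation of the image of $\mathfrak a$ there says nothing about $\mathfrak a$ itself. Concretely, for $f(z_1,z_2)=z_1$ on the closed unit bidisk one has $f(0,\cdot)\equiv 0$, so no choice of radius makes $f$ $z_2$-regular; a coordinate change is unavoidable, and after it you are no longer on a polydisk. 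You are candid that ``controlling this uniform behaviour\ldots is where the real difficulty lies'', but the difficulty is not merely analytic: as written the induction does not close.

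Frisch's proof avoids Weierstrass preparation altogether. He works with semianalytic Stein compacta and argues sheaf-theoretically: for such $K$, every coherent sheaf $F$ near $K$ has $\Gamma(K,F)$ finitely generated over $O_K(K)$ (via Cartan's Theorems~A and~B and the semianalytic hypothesis controlling the boundary), and every ideal of $O_K(K)$ arises as $\Gamma(K,\mathcal I)$ for a coherent ideal sheaf $\mathcal I$. This yields Noetherianity directly, with the class of allowed compacta stable under the operations used in the proof. If you want to salvage the Weierstrass route, you would have to run the induction over a class of compact Stein sets closed under linear images and under projection to hyperplanes (e.g.\ compact polynomially convex sets), and establish a global division theorem in that generality; this can be done but is substantially more work than your sketch indicates.
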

\begin{lm}\label{lm:qch} Let $F$ be an $O_X$-module. Then the following conditions are equivalent:
	\begin{enumerate}
		\item\label{it:Bqch} \textup{(\cite[Def.~5.1]{ben2007non})}  Every $x\in X$ admits a  neighborhood $K$  which is a Noetherian Stein compactum,  such that $F|_K$ is associated with  an $O_K(K)$-module.
		\item\label{it:EGAqch} The $O_X$-module $F$ is quasi-coherent.
	\end{enumerate} 
\end{lm}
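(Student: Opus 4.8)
The plan is to prove the two implications separately, both resting on a single observation: the localization functor $\tilde{\cdot}=f^{*}$ of Example~\ref{eg:inducedqch} is right exact and commutes with arbitrary direct sums, being the pullback $f^{*}$ and hence a left adjoint. Consequently it carries any free presentation of a module to a free presentation of the associated sheaf, sending $O_K(K)^{\oplus\bullet}$ to $O_K^{\oplus\bullet}$. The only genuinely analytic input will be the compactness of $K$; neither the Stein nor the Noetherian hypothesis will be needed beyond Fact~\ref{ft:I10}.

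For \ref{it:Bqch}$\Rightarrow$\ref{it:EGAqch}, I would fix $x\in X$ and a Noetherian Stein compactum $K\ni x$ with $F|_K\cong\tilde M$ for some $O_K(K)$-module $M$, and choose a free presentation $O_K(K)^{\oplus J}\xrightarrow{\alpha_0}O_K(K)^{\oplus I}\to M\to 0$. The key move is to restrict \emph{inward}: pick an open $U$ with $x\in U\subset K$, let $\iota\colon U\hookrightarrow K$ be the open immersion, and set $g=f\circ\iota\colon(U,O_U)\to(\{*\},O_K(K))$. Since restriction to the open set $U$ is $\iota^{*}$, one has $g^{*}=\iota^{*}f^{*}$, so $g^{*}M=(\tilde M)|_U=F|_U$. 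Applying the right exact functor $g^{*}$ to the presentation yields an exact sequence $O_U^{\oplus J}\to O_U^{\oplus I}\to F|_U\to 0$, and as $x$ was arbitrary $F$ is quasi-coherent. This direction deliberately avoids extending a presentation from the compactum $K$ \emph{outward} to an open set, which would require extending infinitely many sections at once.

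For \ref{it:EGAqch}$\Rightarrow$\ref{it:Bqch}, I would fix $x$ and, using quasi-coherence, an open $U\ni x$ carrying a presentation $O_U^{\oplus J}\xrightarrow{\alpha}O_U^{\oplus I}\to F|_U\to 0$. By Fact~\ref{ft:I10} applied inside the analytic space $U$, choose a Noetherian Stein compactum $K$ with $x\in K\subset U$; restricting the presentation to $K$ (an exact operation) gives an exact sequence $O_K^{\oplus J}\xrightarrow{\alpha}O_K^{\oplus I}\to F|_K\to 0$. The candidate module is gotten by descending $\alpha$ to the ring $O_K(K)$: I must show that this $O_K$-module map arises from a map $\alpha_0\colon O_K(K)^{\oplus J}\to O_K(K)^{\oplus I}$, equivalently that the natural map $O_K(K)^{\oplus I}\to\Gamma(K,O_K^{\oplus I})$ is bijective.

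The main obstacle is precisely this identity $\Gamma(K,O_K^{\oplus I})=O_K(K)^{\oplus I}$, which can fail for general index sets over a non-compact base but holds here because $K$ is compact: a section of the direct-sum sheaf is, near each point, supported on finitely many indices, and a finite subcover of $K$ forces global support on a finite index set. Granting this, $\Hom_{O_K}(O_K^{\oplus J},O_K^{\oplus I})=\prod_J\Gamma(K,O_K^{\oplus I})=\prod_J O_K(K)^{\oplus I}=\Hom_{O_K(K)}(O_K(K)^{\oplus J},O_K(K)^{\oplus I})$, so $\alpha=\tilde{\alpha_0}$ for a unique $\alpha_0$. Setting $M:=\coker(\alpha_0)$ and applying the right exact functor $\tilde{\cdot}$ to $O_K(K)^{\oplus J}\xrightarrow{\alpha_0}O_K(K)^{\oplus I}\to M\to 0$ produces the cokernel of $\alpha$, which by the restricted presentation is $F|_K$; hence $F|_K\cong\tilde M$, as required by \ref{it:Bqch}.
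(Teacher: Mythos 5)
Your proof is correct, and the two implications deserve separate verdicts. The direction \ref{it:Bqch}$\Rightarrow$\ref{it:EGAqch} is essentially the paper's argument: apply the right exact, colimit-preserving pullback to a free presentation of $M$ and restrict to an open neighborhood of $x$ inside $K$; the paper gets the exact sequence on $K$ via \cite[Tag 01BH]{stacks-project} and then restricts to $K^{\circ}$, and composing with the open immersion first, as you do, is the same computation. The converse is where you genuinely diverge. The paper outsources the compactness argument to \cite[Tag 01BK]{stacks-project} --- which, for a point with a fundamental system of quasi-compact neighborhoods, produces an open $U\ni x$ such that $F|_U$ is associated with a $\Gamma(U,O_U)$-module --- and then transfers from $U$ to the compactum $K$ by \cite[Tag 01BJ]{stacks-project}. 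You instead restrict a presentation $O_U^{\oplus J}\xrightarrow{\alpha}O_U^{\oplus I}\to F|_U\to 0$ to $K$ and descend $\alpha$ to a map $\alpha_0$ of $O_K(K)$-modules, the crux being the identity $\Gamma(K,O_K^{\oplus I})=O_K(K)^{\oplus I}$, which you correctly deduce from compactness: a section of the direct-sum sheaf locally comes from the direct-sum presheaf, hence locally has finite support, and a finite subcover of $K$ bounds the support globally, reducing to the finite case. This is in effect a self-contained re-proof of the content of Tag 01BK, and working over the compactum makes it cleaner, since the direct-sum-of-sections identity holds on the nose on $K$, whereas on an open set one must shrink carefully. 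The paper's route buys brevity (two citations); yours buys transparency about exactly where quasi-compactness enters and independence from Tags 01BJ/01BK. Two routine points worth recording explicitly: restriction $\Mod(O_U)\to\Mod(O_K)$ is exact and preserves direct sums because $O_K=i^{-1}O_U$, so the ringed-space pullback along $(K,O_K)\to(U,O_U)$ is just $i^{-1}$; and the bijection of Hom groups you exhibit is induced by the localization functor, so that $\tilde{\alpha_0}=\alpha$ indeed holds (both send the $j$-th basis vector to the same global section of $O_K^{\oplus I}$), whence $\tilde{M}=\coker(\tilde{\alpha_0})\cong F|_K$ as required.
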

\begin{proof}
	\hfill	\begin{itemize}
		\item Assume Condition \ref{it:Bqch}. For every $x\in X$, take such a $K$ and suppose that $F|_K$ is associated with  an $O_K(K)$-module $M$.  There are sets $I$, $J$ and an exact sequence $O_K(K)^{\oplus I}\to O_K(K)^{\oplus J}\to M\to0$ in the category of $O_K(K)$-modules. By \cite[\href{https://stacks.math.columbia.edu/tag/01BH}{Tag 01BH}]{stacks-project}, it induces an exact sequence $O_K^{\oplus I}\to O_K^{\oplus J}\to F|_K\to0$ in $\Mod(O_K)$.  Then the $O_{K^{\circ}}$-module $F|_{K^{\circ}}$ is quasi-coherent. Thus, Condition \ref{it:EGAqch} is proved.
		\item Assume Condition \ref{it:EGAqch}. Because $X$ is  locally compact Hausdorff, for every $x\in X$, by \cite[\href{https://stacks.math.columbia.edu/tag/01BK}{Tag 01BK}]{stacks-project}, there is an open neighborhood $U\subset X$ of $x$ such that $F|_U$ is associated with a $\Gamma(U,O_X)$-module. From Fact \ref{ft:I10}, there is a  neighborhood $K$   of $x\in U$ which is a Noetherian Stein compactum. By  \cite[\href{https://stacks.math.columbia.edu/tag/01BJ}{Tag 01BJ}]{stacks-project} applied to the morphism $(K,O_K)\to (U,O_U)$ of ringed spaces,   $F|_K$ is associated with an $O_K(K)$-module. Thus, Condition \ref{it:Bqch} is proved.
	\end{itemize}
\end{proof}
\begin{lm}\label{lm:locK}
	Let	$K$  be a Noetherian Stein compactum in  $X$.
	\begin{enumerate}
		\item\label{it:locsec}   The natural transformation $\Id\to \Gamma(K,\tilde{\cdot})$ of functors  $\Mod(O_K(K))\to \Mod(O_K(K))$ is an isomorphism.
		\item\label{it:fullfaithful} The localization functor $\tilde{\cdot}:\Mod(O_K(K))\to \Mod(O_K)$ is exact, fully faithful.
		\item\label{it:locvanish} For every $O_K(K)$-module $M$ and every integer $q>0$, one has $H^q(K,\tilde{M})=0$.
	\end{enumerate}
\end{lm}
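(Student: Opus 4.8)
The plan is to deduce all three assertions from the finitely generated case by passing to filtered colimits. Write $A=O_K(K)$; it is Noetherian by hypothesis, and $\tilde\cdot=f^{*}$ is left adjoint to $f_{*}=\Gamma(K,-)$, hence commutes with every colimit. Exactness in \ref{it:fullfaithful} I would check on stalks: one has $(\tilde M)_{x}=O_{K,x}\otimes_{A}M$, and a complex of $O_K$-modules is exact iff it is exact at every stalk, so exactness of $\tilde\cdot$ amounts to flatness of each $O_{K,x}$ over $A$, which is part of Frisch's flatness results for Noetherian Stein compacta \cite{frisch1967points}. Granting \ref{it:locsec}, full faithfulness is then purely formal: by adjunction $\Hom_{O_K}(\tilde M,\tilde N)\cong\Hom_{A}(M,\Gamma(K,\tilde N))\cong\Hom_{A}(M,N)$, the second isomorphism being \ref{it:locsec}, and one checks this composite is the canonical map.

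It remains to establish \ref{it:locsec} and \ref{it:locvanish}. If $M$ is finitely generated then, $A$ being Noetherian, $M$ is finitely presented; localizing a finite presentation exhibits $\tilde M$ as the cokernel of a map of finite free $O_K$-modules, so $\tilde M$ is coherent. For such $\tilde M$ the identities $\Gamma(K,\tilde M)=M$ and $H^{q}(K,\tilde M)=0$ for $q>0$ are the analytic analogues of Theorems A and B on the Stein compactum $K$; I would cite Frisch \cite{frisch1967points} directly, or combine Cartan's Theorem B on Stein neighbourhoods with the identification of $H^{q}(K,-)$ as the colimit of $H^{q}(W,-)$ over Stein neighbourhoods $W\supset K$. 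For a general $M$, write $M=\colim_{\alpha}M_{\alpha}$ as the filtered colimit of its finitely generated submodules, so that $\tilde M=\colim_{\alpha}\tilde M_{\alpha}$ with each $\tilde M_{\alpha}$ coherent. Then \ref{it:locsec} and \ref{it:locvanish} follow from the coherent case once one knows that $H^{q}(K,-)$ commutes with filtered colimits of sheaves on $K$, since then $\Gamma(K,\tilde M)=\colim_{\alpha}\Gamma(K,\tilde M_{\alpha})=\colim_{\alpha}M_{\alpha}=M$ and $H^{q}(K,\tilde M)=\colim_{\alpha}0=0$ for $q>0$.

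The main obstacle is precisely this continuity of cohomology on the compactum. In degree $0$ it is elementary and rests only on compactness of $K$: a global section of $\colim_{\alpha}\tilde M_{\alpha}$ is, over each member of a finite open cover of $K$, represented by a section of some $\tilde M_{\alpha}$, and since only finitely many sets and finitely many pairwise overlaps must be reconciled, a single index $\alpha$ suffices; the same finiteness gives injectivity of $\colim_{\alpha}\Gamma(K,\tilde M_{\alpha})\to\Gamma(K,\tilde M)$. In positive degree I would use that, $K$ being compact, $\Gamma(K,-)=\Gamma_{c}(K,-)$ and hence $H^{q}(K,-)=H^{q}_{c}(K,-)$, while $K$, as a compact subset of a space locally modelled on an analytic set in some $\C^{n}$, has finite covering dimension. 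Cohomology with compact supports is continuous for filtered colimits on locally compact spaces of finite soft dimension --- the standard continuity theorem for $H^{*}_{c}$ in the books of Kashiwara--Schapira and Bredon --- and applying it on $K$ furnishes the required commutation. This finite-dimensional continuity input is the one genuinely non-formal ingredient, and it cannot be sidestepped, since already the acyclicity of an infinite free sheaf $O_K^{\oplus I}$ is an instance of cohomology commuting with a direct sum.
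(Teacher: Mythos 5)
Your proposal is correct, and its core strategy coincides with the paper's: write an arbitrary $M$ as the filtered colimit of its finitely generated submodules, settle the finitely generated case by Theorems A and B for Stein compacta, and pass to the limit using continuity of cohomology on the compact set $K$. The paper cites Taylor for $\Gamma(K,\tilde M)=M$ and for exactness of $\tilde{\cdot}$, Cartan's Theorem B for the vanishing, and Godement's Th\'eor\`eme 4.12.1 for the commutation of $\Gamma(K,-)$ and $H^q(K,-)$ with filtered colimits; your appeal to the continuity theorem for compactly supported cohomology (Bredon, Kashiwara--Schapira) is the same ingredient in different clothing, and the finite-dimensionality you invoke, while available here (a compact subset of a complex analytic space is covered by finitely many charts), is actually superfluous: on a compact Hausdorff space a filtered colimit of soft sheaves is soft, so continuity holds in all degrees with no dimension hypothesis. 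Where you genuinely diverge from --- and arguably improve on --- the paper is part \ref{it:fullfaithful}. The paper proves full faithfulness by first assuming $M$ finitely generated (hence finitely presented, by Noetherianness), invoking the formula $\widetilde{\Hom_{O_K(K)}(M,N)}=\cH om_{O_K}(\tilde{M},\tilde{N})$ together with part \ref{it:locsec}, and then writing the general $\Hom$ as an inverse limit of these isomorphisms; your observation that a left adjoint whose unit is invertible is automatically fully faithful derives \ref{it:fullfaithful} from \ref{it:locsec} by pure category theory, with no finiteness hypotheses and no sheaf-Hom computation. Likewise, your reduction of exactness to flatness of each stalk $O_{K,x}$ over $O_K(K)$ via $(\tilde M)_x=O_{K,x}\otimes_{O_K(K)}M$ is sound: this flatness is exactly what underlies the statement of Taylor (Prop.~11.9.3~(ii)) that the paper cites directly, though you should double-check the attribution to Frisch, whose cited theorem in this paper is the Noetherianness of $O_K(K)$.
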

\begin{proof}
	\hfill	\begin{enumerate}\item Let $M$ be an $O_K(K)$-module. We prove that the  morphism $M\to \Gamma(K,\tilde{M})$ is  an isomorphism. Assume first that $M$ is finitely generated.  Then the result follows from \cite[p.299]{taylor2002several}. Assume now that $M$ is arbitrary. Let $\{M_i\}_{i\in I}$ be the  family of all finitely generated submodules of $M$.   This family is directed in the inclusion relation and \begin{equation}\label{eq:sumfg}M=\sum_{i\in I}M_i.\end{equation} By \cite[\href{https://stacks.math.columbia.edu/tag/01BH}{Tag 01BH (4)}]{stacks-project}, the localization functor preserves colimits. Therefore, \begin{equation}\label{eq:locsumfg}\tilde{M}=\colim_{i\in I}\tilde{M_i}.\end{equation} By \cite[Thm.~4.12.1]{godement1958topologie}, one has \[\Gamma(K,\tilde{M})=\colim_{i\in I}\Gamma(K,\tilde{M_i})=\colim_{i\in I}M_i=M.\] 
		\item The exactness is proved in \cite[Prop.~11.9.3 (ii)]{taylor2002several}. For any $M,N\in \Mod(O_K(K))$, we prove that the natural morphism \begin{equation}\label{eq:lochom}\Hom_{O_K(K)}(M,N)\to \Hom_{O_K}(\tilde{M},\tilde{N})\end{equation} is an isomorphism. 
		
		Assume first that $M$ is finitely generated. As the ring $O_K(K)$ is Noetherian, the $O_K(K)$-module $M$ is of finite presentation.  Then by \cite[Exercise~7.20~(b)]{gortz2020algebraic}, one has $\widetilde{\Hom_{O_K(K)}(M,N)}=\cH om_{O_K}(\tilde{M},\tilde{N})$. By Point \ref{it:locsec}, the morphism (\ref{eq:lochom}) is an isomorphism. Assume now that $M$ is arbitrary. By (\ref{eq:sumfg}) and (\ref{eq:locsumfg}), the morphism (\ref{eq:lochom}) is the inverse limit of the morphisms $\Hom_{O_K(K)}(M_i,N)\to \Hom_{O_K}(\tilde{M_i},\tilde{N})$, each of which is an isomorphism.
		\item When $M$ is finitely generated, it follows from \cite[Prop.~11.9.2]{taylor2002several} and \cite[Thm.~1 (B)]{cartan1957varietes}. Assume now that $M$ is arbitrary. By (\ref{eq:locsumfg}) and \cite[Thm.~4.12.1]{godement1958topologie}, one has $H^q(K,\tilde{M})=\colim_iH^q(K,\tilde{M_i})=0$.\end{enumerate}
\end{proof}
\section{Proof of Theorem \ref{thm:qchabel}}

\begin{enumerate}\item\label{it:kercokercls}	For every morphism $f:F\to G$ in $\Qch(X)$, we prove that $\ker(f),\coker(f)$ in $\Mod(O_X)$ lie in $\Qch(X)$.\end{enumerate}

For every $x\in X$, by Lemma \ref{lm:qch}, there is a  neighborhood $A$ (resp. $B$) of $x\in X$ which is a Noetherian Stein compactum and an $O_A(A)$-module $M$ (resp.  $O_B(B)$-module $N$), such  that $F|_A$ (resp. $G|_B$) is associated with $M$ (resp. $N$). 	By   Fact \ref{ft:I10}, there is a neighborhood $C$ of $x\in A^{\circ}\cap B^{\circ}$ which is a Noetherian Stein compactum. From \cite[\href{https://stacks.math.columbia.edu/tag/01BJ}{Tag 01BJ}]{stacks-project}, $F|_C$ (resp. $G|_C$) is associated with $M\otimes_{O_A(A)}O_C(C)$ (resp. $N\otimes_{O_B(B)}O_C(C)$). By Lemma \ref{lm:locK} \ref{it:fullfaithful}, there is a morphism  \[\phi:M\otimes_{O_A(A)}O_C(C)\to N\otimes_{O_B(B)}O_C(C)\] in $\Mod(O_C(C))$ whose localization is $f|_C:F|_C\to G|_C$.   The restriction functor $\Mod(O_X)\to \Mod(O_{C^{\circ}})$ is exact,  so $\ker(f)|_{C^{\circ}}$ (resp. $\coker(f)|_{C^{\circ}}$) is the localization of $\ker(\phi\otimes_{O_C(C)}\Id_{O_X(C^{\circ})})$ (resp. $\coker(\phi\otimes_{O_C(C)}\Id_{O_X(C^{\circ})})$) in $\Mod(O_X(C^{\circ}))$. Therefore, the $O_X$-modules $\ker(f),\coker(f)$ are quasi-coherent.  

\begin{enumerate}\setItemnumber{2}\item\label{it:extensioncls}	Let \begin{equation}\label{eq:extinQch}0\to F'\to F\to F''\to0\end{equation} be a short exact sequence in $\Mod(O_X)$, with $F',F''$ quasi-coherent. We prove that $F$ is quasi-coherent.\end{enumerate} By Lemma \ref{lm:qch}, for every $x\in X$, there is a neighborhood $K'$ (resp. $K''$) of $x$ which is a Noetherian Stein compactum, and an $O_{K'}(K')$-module $M'$ (resp.  $O_{K''}(K'')$-module $M''$) whose localization is $F'|_{K'}$ (resp. $F''|_{K''}$). By Fact \ref{ft:I10}, there is a neighborhood $K$ of $x\in K'^{\circ}\cap K''^{\circ}$  that is a Noetherian Stein compactum.  From \cite[\href{https://stacks.math.columbia.edu/tag/01BJ}{Tag 01BJ}]{stacks-project}, $F'|_K$ (resp. $F''|_K$) is associated with the $O_K(K)$-module $M'\otimes_{O_{K'}(K')}O_K(K)$ (resp. $M''\otimes_{O_{K''}(K'')}O_K(K)$).

Let $P=\Gamma(K,F)$.	By Lemma \ref{lm:locK} \ref{it:locsec} and \ref{it:locvanish}, the sequence (\ref{eq:extinQch}) induces a short exact sequence in $\Mod(O_K(K))$: \[0\to M'\otimes_{O_{K'}(K')}O_K(K)\to P\to M''\otimes_{O_{K''}(K'')}O_K(K)\to0.\] From Lemma \ref{lm:locK} \ref{it:fullfaithful}, by localization it induces a shot exact sequence in $\Mod(O_K)$: \[	0 \to \widetilde{M'\otimes_{O_{K'}(K')}O_K(K)} \to \tilde{P}\to \widetilde{M''\otimes_{O_{K''}(K'')}O_K(K)}\to0.\] By restricting to $K^{\circ}$  and \cite[\href{https://stacks.math.columbia.edu/tag/01BJ}{Tag 01BJ}]{stacks-project}, one has a commutative diagram
\begin{center}
	\begin{tikzcd}
		0 \arrow[r] & \widetilde{M'\otimes_{O_{K'}(K')}O_X(K^{\circ})} \arrow[r] \arrow[d] & \widetilde{P\otimes_{O_K(K)}O_X(K^{\circ})} \arrow[r] \arrow[d] & \widetilde{M''\otimes_{O_{K''}(K'')}O_X(K^{\circ})} \arrow[r] \arrow[d] & 0 \\
		0 \arrow[r] & F'|_{K^{\circ}} \arrow[r]                                       & F|_{K^{\circ}}  \arrow[r]        & F''|_{K^{\circ}}  \arrow[r]                                        & 0
	\end{tikzcd}
\end{center}  in $\Mod(O_{K^{\circ}})$.  The vertical morphisms are given by the canonical morphism $P\otimes_{O_K(K)}O_X(K^{\circ})\to \Gamma(K^{\circ},F)$ in $\Mod(O_X(K^{\circ}))$, and the adjunction of $\tilde{\cdot}:\Mod(O_X(K^{\circ}))\to \Mod(O_{K^{\circ}})$ and $\Gamma(K^{\circ},\cdot):\Mod(O_{K^{\circ}})\to \Mod(O_X(K^{\circ}))$. The rows are exact, and the two outside vertical arrows are isomorphisms. By the five lemma,  the middle vertical morphism is an isomorphism. By Example \ref{eg:inducedqch}, the $O_{K^{\circ}}$-module $ F|_{K^{\circ}}$ is quasi-coherent. Consequently, $F$ is quasi-coherent.

By \ref{it:kercokercls}, \ref{it:extensioncls} and \cite[\href{https://stacks.math.columbia.edu/tag/0754}{Tag 0754}]{stacks-project}, $\Qch(X)$ is a weak Serre subcategory of $\Mod(O_X)$.
\subsection*{Acknowledgments} I thank my supervisor, Anna Cadoret, for her constant support.
\printbibliography
\end{document}